\documentclass[a4paper,12pt]{amsart}
\usepackage{amssymb}
\usepackage{pifont} 
\usepackage{bbding}
\usepackage{wasysym}
\usepackage{ifthen}
\usepackage{cite}
 \usepackage[dvips]{graphicx}
\nonstopmode \numberwithin{equation}{section}
\setlength{\textwidth}{15cm} \setlength{\oddsidemargin}{0cm}
\setlength{\evensidemargin}{0cm} \setlength{\footskip}{40pt}
\pagestyle{plain}
\usepackage{amssymb}
\usepackage{ifthen}
\usepackage{graphicx}
\usepackage{amsmath}
\usepackage[T1]{fontenc} 
\usepackage[utf8]{inputenc}
\usepackage[usenames,dvipsnames]{color}
\usepackage{color}
\usepackage[english]{babel}
\usepackage{fancyhdr}
\usepackage{fancybox}
\usepackage{tikz}

\nonstopmode \numberwithin{equation}{section}
\setlength{\textwidth}{15cm} \setlength{\oddsidemargin}{0cm}
\setlength{\evensidemargin}{0cm} \setlength{\footskip}{40pt}
\pagestyle{plain}

\theoremstyle{plain}

\newtheorem{conj}{Conjecture}

\theoremstyle{definition}

\newtheorem{thm}{Theorem}[section]
\newtheorem{prob}{Problem}[section]
\newtheorem{cor}{Corollary}[section]

\newtheorem{prop}{Proposition}[section]
\newtheorem{rem}{Remark}[section]
\newtheorem{lem}{Lemma}[section]


\newcounter{minutes}\setcounter{minutes}{\time}
\divide\time by 60
\newcounter{hours}\setcounter{hours}{\time}
\multiply\time by 60
\addtocounter{minutes}{-\time}

\newcounter {own}
\def\theown {\thesection       .\arabic{own}}

\newenvironment{pf}[1][]{%
 \vskip 3mm
 \noindent
 \ifthenelse{\equal{#1}{}}%
  {{\slshape Proof. }}%
  {{\slshape #1.} }%
 }%
{\qed\bigskip}

\newcounter{alphabet}





\def\be{\begin{equation}}
\def\ee{\end{equation}}

\newcommand{\bee}{\begin{enumerate}}
\newcommand{\eee}{\end{enumerate}}

\newcommand{\blem}{\begin{lem}}
\newcommand{\elem}{\end{lem}}
\newcommand{\bthm}{\begin{thm}}
\newcommand{\ethm}{\end{thm}}
\newcommand{\bcor}{\begin{cor}}
\newcommand{\ecor}{\end{cor}}
\newcommand{\beg}{\begin{examp}}
\newcommand{\eeg}{\end{examp}}
\newcommand{\begs}{\begin{examples}}
\newcommand{\eegs}{\end{examples}}

\newcommand{\bdefn}{\begin{defn}}
\newcommand{\edefn}{\end{defn}}

\newcommand{\bprob}{\begin{prob}}
\newcommand{\eprob}{\end{prob}}
\newcommand{\bei}{\begin{itemize}}
\newcommand{\eei}{\end{itemize}}

\newcommand{\bcon}{\begin{conj}}
\newcommand{\econ}{\end{conj}}
\newcommand{\bcons}{\begin{conjs}}
\newcommand{\econs}{\end{conjs}}
\newcommand{\bprop}{\begin{prop}}
\newcommand{\eprop}{\end{prop}}
\newcommand{\br}{\begin{rem}}
\newcommand{\er}{\end{rem}}
\newcommand{\brs}{\begin{rems}}
\newcommand{\ers}{\end{rems}}
\newcommand{\bo}{\begin{obser}}
\newcommand{\eo}{\end{obser}}
\newcommand{\bos}{\begin{obsers}}
\newcommand{\eos}{\end{obsers}}
\newcommand{\bpf}{\begin{pf}}
\newcommand{\epf}{\end{pf}}
\newcommand{\ba}{\begin{array}}
\newcommand{\ea}{\end{array}}
\newcommand{\beq}{\begin{eqnarray}}
\newcommand{\beqq}{\begin{eqnarray*}}
\newcommand{\eeq}{\end{eqnarray}}
\newcommand{\eeqq}{\end{eqnarray*}}

\begin{document}

\title{Second Hankel determinant of Logarithmic coefficients of inverse functions in certain classes of univalent functions}
\author{Sanju Mandal}
\address{Sanju Mandal, Department of Mathematics, Jadavpur University, Kolkata-700032, West Bengal, India.}
\email{sanjum.math.rs@jadavpuruniversity.in}

\author{Molla Basir Ahamed$ ^* $}
\address{Molla Basir Ahamed, Department of Mathematics, Jadavpur University, Kolkata-700032, West Bengal, India.}
\email{mbahamed.math@jadavpuruniversity.in}

\subjclass[{AMS} Subject Classification:]{Primary 30A10, 30H05, 30C35, Secondary 30C45}
\keywords{Univalent functions, Starlike functions, Convex functions, Hankel Determinant, Logarithmic coefficients, Schwarz functions}

\def\thefootnote{}
\footnotetext{ {\tiny File:~\jobname.tex,
printed: \number\year-\number\month-\number\day,
          \thehours.\ifnum\theminutes<10{0}\fi\theminutes }
} \makeatletter\def\thefootnote{\@arabic\c@footnote}\makeatother
\begin{abstract} 
The Hankel determinant $H_{2,1}(F_{f^{-1}}/2)$ of logarithmic coefficients is defined as:
\begin{align*}
	H_{2,1}(F_{f^{-1}}/2):= \begin{vmatrix}
		\Gamma_1 & \Gamma_2 \\
		\Gamma_2 & \Gamma_3
	\end{vmatrix}=\Gamma_1\Gamma_3-\Gamma^2_2,
\end{align*}
where $\Gamma_1, \Gamma_2,$ and $\Gamma_3$ are the first, second and third logarithmic coefficients of inverse functions belonging to the class $\mathcal{S}$ of normalized univalent functions. In this article, we establish sharp inequalities $|H_{2,1}(F_{f^{-1}}/2)|\leq 19/288$, $|H_{2,1}(F_{f^{-1}}/2)| \leq 1/144$, and $|H_{2,1}(F_{f^{-1}}/2)| \leq 1/36$ for the logarithmic coefficients of inverse functions, considering starlike and convex functions, as well as functions with bounded turning of order $1/2$, respectively.
\end{abstract}
\maketitle
\pagestyle{myheadings}
\markboth{S. Mandal and M. B. Ahamed}{Second Hankel determinant of Logarithmic coefficients of inverse functions}
\tableofcontents
\section{Introduction}

The coefficient problem is a fundamental aspect of geometric functions theory, and finding sharp results for this problem is a central objective because it offers crucial insights into the behavior of functions on geometric spaces and has far-reaching implications in various mathematical and scientific disciplines. Let $\mathcal{H}$ be the class of functions $ f $ which are holomorphic in the open unit disk $\mathbb{D}=\{z\in\mathbb{C}: |z|<1\}$. Then $\mathcal{H}$ is a locally convex topological vector space endowed with the topology of uniform convergence over compact subsets of $\mathbb{D}$. Let $\mathcal{A}$ denote the class of functions $f\in\mathcal{H}$ such that $f(0)=0$ and $f^{\prime}(0)=1$. That is, the function $f$ of the form
\begin{align}\label{eq-1.1}
	f(z)=z+ \sum_{n=2}^{\infty}a_nz^n,\; \mbox{for}\; z\in\mathbb{D}.
\end{align} 
Let $\mathcal{S}$ denote the subclass of all functions in $\mathcal{A}$ which are univalent. For a general theory of univalent functions and their significance in coefficients problem, we refer consulting the books \cite{Duren-1983-NY,Goodman-1983}. For $f\in\mathcal{S}$, we define
\begin{align}\label{eq-1.2}
	F_{f}(z):=\log\dfrac{f(z)}{z}=2\sum_{n=1}^{\infty}\gamma_{n}(f)z^n, \;\; z\in\mathbb{D},\;\;\log 1:=0,
\end{align}
a logarithmic function associated with $f\in\mathcal{S}$. The numbers $\gamma_{n}:=\gamma_{n}(f)$ ($ n\in\mathbb{N} $) are called the logarithmic coefficients of $f$. Although the logarithmic coefficients $\gamma_{n}$ play a critical role in the theory of univalent functions, it appears that only a limited number of sharp bounds have been established for them. As is well known, the logarithmic coefficients play a crucial role in Milin’s conjecture (\cite{Milin-1977-ET}, see also \cite[p.155]{Duren-1983-NY}).
Milin conjectured that for $f\in\mathcal{S}$ and $n\geq2$,
\begin{align*}
	\sum_{m=1}^{n}\sum_{k=1}^{m}\left(k|\gamma_{k}|^2 -\frac{1}{k}\right)\leq 0
\end{align*}
where the equality holds if, and only if, $f$ is a rotation of the Koebe function. De Branges \cite{Branges-AM-1985} proved Milin conjecture which confirmed the famous Bieberbach conjecture. On the other hand, one of reasons for more attention has been given to the Logarithmic coefficients is that the sharp bound for the class $\mathcal{S}$ is known only for $\gamma_{1}$ and $\gamma_{2}$, namely
\begin{align*}
	|\gamma_{1}|\leq 1, \;\; |\gamma_{2}|\leq \dfrac{1}{2}+ \dfrac{1}{e} =0.635\ldots
\end{align*}
It is still an open problem to find the sharp bounds of $\gamma_{n}$, $n\geq 3$, for the class $\mathcal{S}$. Estimating the modulus of logarithmic coefficients for $f\in\mathcal{S}$ and various sub-classes has been considered recently by several authors. For more information on the topic, we recommend consulting the articles \cite{Ali-Allu-PAMS-2018, Ali-Allu-Thomas-CRMCS-2018,Cho-Kowalczyk-kwon-Lecko-Sim-RACSAM-2020,Girela-AASF-2000,Thomas-PAMS-2016} and references therein.\vspace{2mm}

Let $F$ be the inverse function of $f\in\mathcal{S}$ defined in a neighborhood of the origin with the Taylor series expansion
\begin{align}\label{eq-1.3}
	F(w):=f^{-1}(w)= w+\sum_{n=2}^{\infty} A_n w^n,
\end{align}
where we may choose $|w|<1/4$, as we know that the famous K$\ddot{\mbox{o}}$ebe’s $1/4$-theorem ensures that, for each univalent function $f$ defined in $\mathbb{D}$, it inverse $f^{-1}$ exists at least on a disc of radius $1/4$. Using a variational method, L$\ddot{\mbox{o}}$wner \cite{Lowner-IMA-1923} obtained the sharp estimate $ |A_n|\leq K_n \;\mbox{for each}\; n, $
where $K_n=(2n)!/(n!(n+1)!)$ and $K(w)= w +K_2 w^2 +K_3 w^3 +\cdots$ is the inverse of the K\"oebe function. There has been a good deal of interest in determining the behavior of the inverse coefficients of $f$ given in \eqref{eq-1.1} when the corresponding function $f$ is restricted to some proper geometric subclasses of $\mathcal{S}$.\vspace{2mm}

Let $f(z)=z+ \sum_{n=2}^{\infty}a_nz^n$ be a function in class $\mathcal{S}$. Science $f(f^{-1}(w))=w$ and using \eqref{eq-1.3} we obtain
\begin{align}\label{eq-1.4}
   \begin{cases}
	A_2= -a_2, \\ A_3=-a_3 +2a^2_{2}, \\ A_4=- a_4 +5a_2 a_3 -5a^3_{2}, \\ A_5=- a_5+6a_4 a_2- 21a_3 a^2_{2} +3a^2_{3} +14a^4_{2}.
   \end{cases}
\end{align}
The notation of the logarithmic coefficient of inverse of $f$ was introduced by Ponnusamy \textit{et al.} (see \cite{Ponnusamy-Sharma-Wirths-RM-2018}). As with $f$, the logarithmic inverse coefficients $\Gamma_n$, $n\in\mathbb{N}$, of $F$ are defined by the equation
\begin{align}\label{eq-1.5}
	\log\left(\frac{F(w)}{w}\right)=2\sum_{n=1}^{\infty} \Gamma_n(F) w^n \;\;\;\; \mbox{for} \;\;|w|<1/4.
\end{align}
The author's obtained the sharp bound for the logarithmic inverse coefficients for the class $\mathcal{S}$. In fact, Ponnusamy \textit{et al.} \cite{Ponnusamy-Sharma-Wirths-RM-2018} established for $f\in\mathcal{S}$ that
\begin{align*}
	|\Gamma_n(F)|\leq\frac{1}{2n}\binom{2n}{n}
\end{align*}
and showed that the equality holds only for the K\"oebe function or  its rotations.
Differentiating \eqref{eq-1.5} and using \eqref{eq-1.4}, we obtain
\begin{align}\label{eq-1.6}
	\begin{cases}
		\Gamma_1=-\frac{1}{2}a_2, \\ \Gamma_2=-\frac{1}{2}\left(a_3 -\frac{3}{2}a^2_{2}\right), \\ \Gamma_3=-\frac{1}{2}\left(a_4 -4a_2 a_3 +\frac{10}{3}a^3_{2}\right), \\ \Gamma_4 =-\frac{1}{2} \left(a_5 -5a_4 a_2 +15 a_3 a^2_{2} -\frac{5}{2}a^2_{3} -\frac{35}{4}a^4_{2}\right).
	\end{cases}
\end{align}
The evaluation of Hankel determinants has been a major concern in geometric function theory, where these determinants are formed by employing coefficients of analytic functions $f$ that are characterized by \eqref{eq-1.1} and defined within the region $\mathbb{D}$. Hankel matrices (and determinants) have emerged as fundamental elements in different areas of mathematics, finding a wide array of applications \cite{Ye-Lim-FCM-2016}. The primary objective of this study is to determine the sharp bound for the second Hankel determinant, which involves the use of logarithmic coefficients. To begin, we present the definitions of Hankel determinants in situations where $f\in \mathcal{A}$.\vspace{1.2mm}

The Hankel determinant $H_{q,n}(f)$ of Taylor's coefficients of functions $f\in\mathcal{A}$ represented by \eqref{eq-1.1} is defined for $q,n\in\mathbb{N}$ as follows:
\begin{align*}
	H_{q,n}(f):=\begin{vmatrix}
		a_{n} & a_{n+1} &\cdots& a_{n+q-1}\\ a_{n+1} & a_{n+2} &\cdots& a_{n+q} \\ \vdots & \vdots & \vdots & \vdots \\ a_{n+q-1} & a_{n+q} &\cdots& a_{n+2(q-1)}
	\end{vmatrix}.
\end{align*}
Recently, Kowalczyk and Lecko \cite{Kowalczyk-Lecko-BAMS-2022} proposed a Hankel determinant whose elements are the logarithmic coefficients of $f\in\mathcal{S}$, realizing the extensive use of these coefficients. Inspired by these ideas, we introduce the investigation of the Hankel determinant $H_{q,n}(F_{f^{-1}}/2)$, wherein the elements are logarithmic coefficients of inverse functions of $f^{-1}\in\mathcal{S}$. The determinant $H_{q,n}(F_{f^{-1}}/2)$ is expressed as follows:
\begin{align*}
	H_{q,n}(F_{f^{-1}}/2)=\begin{vmatrix}
		\Gamma_{n} & \Gamma_{n+1} &\cdots& \Gamma_{n+q-1}\\ \Gamma_{n+1} & \Gamma_{n+2} &\cdots& \Gamma_{n+q} \\ \vdots & \vdots & \vdots & \vdots \\ \Gamma_{n+q-1} & \Gamma_{n+q} &\cdots& \Gamma_{n+2(q-1)}
	\end{vmatrix}.
\end{align*}

The extensive exploration of sharp bounds of the Hankel determinants for starlike, convex, and other function classes has been undertaken in various studies (see \cite{Ponnusamy-Sugawa-BDSM-2021, Kowalczyk-Lecko-RACSAM-2023, Raza-Riza-Thomas-BAMS-2023, Sim-Lecko-Thomas-AMPA-2021, Kowalczyk-Lecko-BAMS-2022}), and their precise bounds have been successfully established. Recently, there has been a growing interest in studying Hankel determinants incorporating logarithmic coefficients within certain sub-classes of starlike, convex, univalent, strongly starlike, and strongly convex functions (see \cite{Allu-Arora-Shaji-MJM-2023, Kowalczyk-Lecko-BAMS-2022, Kowalczyk-Lecko-BMMS-2022, Krishna-Reddy-CVEE-2015, Sim-Thomas-CVEE-2022} and the relevant literature). Despite this, the sharp bound of Hankel determinants of logarithmic coefficients remains relatively unknown, prompting the need for comprehensive studies for various function classes.\vspace{2mm} 


Let $\alpha\in[0,1)$. A function $f\in\mathcal{A}$ is called starlike of order $\alpha$ if
\begin{align}\label{eq-1.7}
	\mbox{Re}\left(\frac{zf^{\prime}(z)}{f(z)}\right)>\alpha,\;\;z\in\mathbb{D}.
\end{align}

A function $f\in\mathcal{A}$ is called convex of order $\alpha$ if
\begin{align}\label{eq-1.8}
	\mbox{Re}\left(1+\frac{zf^{\prime\prime}(z)}{f^{\prime}(z)}\right) >\alpha,\;\;z\in\mathbb{D}.
\end{align}
A function $f\in\mathcal{A}$ is called bounded turning functions of order $\alpha$ if
\begin{align}\label{eq-1.9}
	\mbox{Re}f^{\prime}(z)>\alpha,\;\;z\in\mathbb{D}.
\end{align}
These classes are usually denoted as $\mathcal{S}^{*}(\alpha)$, $\mathcal{S}^{c}(\alpha)$ and $\mathcal{R}(\alpha)$ respectively. The classes $\mathcal{S}^{*}(0):=\mathcal{S}^{*}$, $\mathcal{S}^{c}(0)=:\mathcal{S}^{c}$ and $\mathcal{R}(0):=\mathcal{R}$ consist of starlike, convex and bounded turning respectively. Both classes $\mathcal{S}^{*}(\alpha)$ and $\mathcal{S}^{c}(\alpha)$ were introduced by Robertson \cite{Robertson-AM-1936}(e.g,\cite[Vol. I,p. 138]{Goodman-1983}).
An important role is played by the class $\mathcal{S}^{*}(1/2)$. One of
the significant results belongs to Marx \cite{Marx-MA-1932} and Strohh\"acker
\cite{Strohhäcker-MZ-1933}. They proved that $\mathcal{S}^{c}\subset \mathcal{S}^{*}(1/2)$ (see also \cite[Theorem 2.6a, p. 57] {Miller-Mocanu-2000}).\vspace{2mm}

In light of the significance of logarithmic coefficients in problems pertaining to geometric function theory, there has been a growing interest in computing the Hankel determinant using logarithmic coefficients in recent years. But, sharp bound of Hankel determinant of logarithmic coefficients of inverse function are known very little. We considering the second Hankel determinant of $F_{f^{-1}}/2$ is defined as
\begin{align}\label{eq-1.10}
	H_{2,1}(F_{f^{-1}}/2)=\Gamma_{1}\Gamma_{3} -\Gamma^2_{2}=\frac{1}{48} \left(13a^4_2 -12a^2_2 a_3 - 12 a^2_3 + 12 a_2 a_4\right).
\end{align}
The objective of this study is to investigate the sharp bounds of the Hankel determinant $H_{2,1}(F_{f^{-1}}/2)$ for classes of functions, including starlike, convex, and bounded turning functions of order $1/2$. \vspace{2mm}

\section{Preliminary results}
The Carath$\acute{e}$odory class $\mathcal{P}$ and its coefficients bounds plays a significant roles in establishing the bounds of Hankel determinants. The class $\mathcal{P}$ of analytic functions $h$ defined for $z\in\mathbb{D}$ is given by
\begin{align}\label{eq-2.1}
	p(z)=1+\sum_{n=1}^{\infty}c_n z^n
\end{align}
with positive real part in $\mathbb{D}$. A member of $\mathcal{P}$ is called a Carath$\acute{e}$odory function. It is known that $c_n\leq 2$, $n\geq 1$ for a function $p\in\mathcal{P}$ (see \cite{Duren-1983-NY}).\vspace{2mm}

In this section, we provide crucial lemmas that will be utilized to establish the main results of this paper. Parametric representations of the coefficients are often useful in finding the bound of Hankel determinants, in this regard, Libera and Zlotkiewicz \cite{Libera-Zlotkiewicz-PAMS-1982, Libera-Zlotkiewicz-PAMS-1983} derived the following parameterizations of possible values of $c_2$ and $c_3$.
\begin{lem}\cite{Libera-Zlotkiewicz-PAMS-1982,Libera-Zlotkiewicz-PAMS-1983}\label{lem-2.1}
If $p\in\mathcal{P}$ is of the form \eqref{eq-2.1} with $c_1\geq 0$, then 
\begin{align}\label{eq-2.2}
	&c_1=2\tau_1,\\\label{eq-2.3} &c_2=2\tau^2_1 +2(1-\tau^2_1)\tau_2
\end{align}
and
\begin{align}\label{eq-2.4}
	c_3= 2\tau^3_1  + 4(1 -\tau^2_1)\tau_1\tau_2 - 2(1 - \tau^2_1)\tau_1\tau^2_2 + 
	2(1 - \tau^2_1)(1 - |\tau_2|^2)\tau_3
\end{align}
for some $\tau_1\in[0,1]$ and $\tau_2,\tau_3\in\overline{\mathbb{D}}:= \{z\in\mathbb{C}:|z|\leq 1\}$.\vspace{1.2mm}

For $\tau_1\in\mathbb{T}:=\{z\in\mathbb{C}:|z|=1\}$, there is a unique function $p\in\mathcal{P}$ with $c_1$ as in \eqref{eq-2.2}, namely
\begin{align*}
	p(z)=\frac{1+\tau_1 z}{1-\tau_1 z}, \;\;z\in\mathbb{D}.
\end{align*}

For $\tau_1\in\mathbb{D}$ and $\tau_2\in\mathbb{T}$, there is a unique function $p\in\mathcal{P}$ with $c_1$ and $c_2$ as in \eqref{eq-2.2} and \eqref{eq-2.3}, namely
\begin{align*}
	p(z)=\frac{1+(\overline{\tau_1}\tau_2 +\tau_1)z+\tau_2 z^2}{1 +(\overline{\tau_1}\tau_2 -\tau_1)z-\tau_2 z^2}, \;\;z\in\mathbb{D}.
\end{align*}

For $\tau_1,\tau_2\in\mathbb{D}$ and $\tau_3\in\mathbb{T}$, there is a unique function $p\in\mathcal{P}$ with $c_1,c_2$ and $c_3$ as in \eqref{eq-2.2}--\eqref{eq-2.4}, namely
\begin{align*}
	p(z)=\frac{1+(\overline{\tau_2}\tau_3+\overline{\tau_1}\tau_2 +\tau_1)z +(\overline{\tau_1}\tau_3+ \tau_1\overline{\tau_2}\tau_3 +\tau_2)z^2 +\tau_3 z^3}{1 +(\overline{\tau_2}\tau_3+ \overline{\tau_1}\tau_2 -\tau_1)z +(\overline{\tau_1}\tau_3- \tau_1\overline{\tau_2}\tau_3 -\tau_2)z^2 -\tau_3 z^3}, \;\;z\in\mathbb{D}.
\end{align*}
\end{lem}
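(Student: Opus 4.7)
The plan is to derive the representation via the classical correspondence between the Carathéodory class $\mathcal{P}$ and Schwarz functions, together with the Schur-Cohn/Schur algorithm. Specifically, for $p\in\mathcal{P}$ set $\omega(z)=(p(z)-1)/(p(z)+1)$; the inequality $|p-1|\le|p+1|$ on $\{\real p>0\}$ together with $\omega(0)=0$ shows that $\omega$ is a Schwarz function, and $p=(1+\omega)/(1-\omega)$ is the inverse correspondence. Expanding $(1-\omega)p=1+\omega$ and matching coefficients yields $c_1=2b_1$, $c_2=2b_1^2+2b_2$, $c_3=2b_1^3+4b_1b_2+2b_3$, where $\omega(z)=\sum_n b_n z^n$. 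The problem thus reduces to parameterising $b_1,b_2,b_3$ for a Schwarz function.

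Next I would apply the Schur algorithm. Writing $\omega(z)=z\omega_1(z)$ produces a holomorphic self-map $\omega_1:\mathbb{D}\to\overline{\mathbb{D}}$ with $\omega_1(0)=b_1$; set $\tau_1=\omega_1(0)$. If $|\tau_1|<1$ define $\omega_2(z):=\bigl(\omega_1(z)-\tau_1\bigr)\big/\bigl(z(1-\overline{\tau_1}\omega_1(z))\bigr)$, which is again a disk self-map, and put $\tau_2=\omega_2(0)$. Iterate once more to get $\tau_3=\omega_3(0)$. Inverting the Möbius step gives
\begin{align*}
\omega_2(z)=\frac{\tau_2+z\omega_3(z)}{1+\overline{\tau_2}\,z\omega_3(z)},\qquad
\omega_1(z)=\frac{\tau_1+z\omega_2(z)}{1+\overline{\tau_1}\,z\omega_2(z)}.
\end{align*}
Expanding $\omega_2$ to first order in $z$ (only $\tau_3=\omega_3(0)$ enters) gives $\omega_2(z)=\tau_2+(1-|\tau_2|^2)\tau_3\,z+O(z^2)$; substituting into the expression for $\omega_1$ and expanding to second order yields
\begin{align*}
b_1=\tau_1,\qquad b_2=(1-|\tau_1|^2)\tau_2,\qquad b_3=(1-|\tau_1|^2)\bigl[(1-|\tau_2|^2)\tau_3-\overline{\tau_1}\tau_2^{\,2}\bigr].
\end{align*}
Since the hypothesis $c_1\ge 0$ forces $\tau_1\in[0,1]$, substituting these $b_j$ into the earlier relations produces \eqref{eq-2.2}--\eqref{eq-2.4} verbatim.

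For the uniqueness clauses I would treat the boundary cases in turn. If $\tau_1\in\mathbb{T}$, then $|\omega_1(0)|=1$ and the Schwarz-Pick lemma forces $\omega_1\equiv\tau_1$, so $\omega(z)=\tau_1 z$ and $p(z)=(1+\tau_1z)/(1-\tau_1z)$. If $\tau_1\in\mathbb{D}$ and $\tau_2\in\mathbb{T}$, the same argument applied to $\omega_2$ yields $\omega_2\equiv\tau_2$; back-substituting into the Möbius formula for $\omega_1$ and then into $p=(1+z\omega_1)/(1-z\omega_1)$ and simplifying produces the stated rational function. The case $\tau_1,\tau_2\in\mathbb{D}$, $\tau_3\in\mathbb{T}$ is handled identically by forcing $\omega_3\equiv\tau_3$ and unwinding two Möbius steps.

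The routine part is the coefficient bookkeeping; the only conceptual obstacle is to verify cleanly that each step of the Schur recursion actually lands inside the class of Schwarz functions (so that the next extraction is legitimate), which is the content of the Schwarz-Pick lemma applied to the auxiliary function $(\omega_k-\tau_k)/(1-\overline{\tau_k}\omega_k)$. The uniqueness statements rely on the maximum-modulus/Schwarz-Pick rigidity in the boundary case, and the final rational expressions are obtained simply by iterated substitution and a careful algebraic simplification of numerator and denominator.
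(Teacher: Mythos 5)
Your proposal is correct, but note that the paper itself offers no proof of this lemma at all: it is imported verbatim with a citation to Libera--Zlotkiewicz, so there is nothing internal to compare against. Your route --- passing to the Schwarz function $\omega=(p-1)/(p+1)$, running the Schur algorithm to get $b_1=\tau_1$, $b_2=(1-|\tau_1|^2)\tau_2$, $b_3=(1-|\tau_1|^2)\bigl[(1-|\tau_2|^2)\tau_3-\overline{\tau_1}\tau_2^{\,2}\bigr]$, and back-substituting into $c_1=2b_1$, $c_2=2b_1^2+2b_2$, $c_3=2b_1^3+4b_1b_2+2b_3$ --- does reproduce \eqref{eq-2.2}--\eqref{eq-2.4} exactly once $c_1\ge 0$ forces $\tau_1\in[0,1]$, and the three uniqueness clauses follow from Schwarz--Pick rigidity plus unwinding the M\"obius steps, exactly as you describe; I checked that the iterated substitutions yield the three displayed rational functions verbatim. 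This is the modern derivation (the one used, e.g., by Cho--Kowalczyk--Kwon--Lecko--Sim), whereas the original Libera--Zlotkiewicz argument instead extracts $c_2$ and $c_3$ from the positive semi-definiteness of the Carath\'eodory--Toeplitz determinants; the Schur-algorithm route has the advantage of delivering the extremal (uniqueness) cases for free from the boundary rigidity of the Schwarz lemma. One small point worth making explicit: when $|\tau_1|=1$ (or $|\tau_2|=1$) the algorithm terminates and the later parameters are not produced by the recursion, so for the existence statement you should say that $\tau_2,\tau_3$ may then be chosen arbitrarily in $\overline{\mathbb{D}}$, the formulas being unaffected because the factors $1-\tau_1^2$ (resp.\ $1-|\tau_2|^2$) vanish.
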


\begin{lem}\cite{Cho-Kim-Sugawa-JMSJ-2007}\label{lem-2.2}
Let $A, B, C$ be real numbers and
\begin{align*}
	Y(A,B,C):=\max\{|A+ Bz +Cz^2| +1-|z|^2: z\in\overline{\mathbb{D}}\}.
\end{align*}
\noindent{(i)} If $AC\geq 0$, then
\begin{align*}
	Y(A,B,C)=\begin{cases}
		|A|+|B|+|C|, \;\;\;\;\;\;\;\;\;\;\;\;\;|B|\geq 2(1-|C|), \vspace{2mm}\\ 1+|A|+\dfrac{B^2}{4(1-|C|)}, \;\;\;\;\;|B|< 2(1-|C|).
	\end{cases}
\end{align*}
\noindent{(ii)} If $AC<0$, then
\begin{align*}
	Y(A,B,C)=\begin{cases}
		1-|A|+\dfrac{B^2}{4(1-|C|)}, \;\;\;\;-4AC(C^{-2}-1)\leq B^2\land|B|< 2(1-|C|),\vspace{2mm} \\ 1+|A|+\dfrac{B^2}{4(1+|C|)}, \;\;\;\; B^2<\min\{4(1+|C|)^2,-4AC(C^{-2}-1)\}, \vspace{2mm} \\ R(A,B,C), \;\;\;\;\;\;\;\;\;\;\;\;\;\;\;\;\;\;\; otherwise,
	\end{cases}
\end{align*}
where
\begin{align*}
	R(A,B,C):= \begin{cases}
		|A|+|B|-|C|, \;\;\;\;\;\;\;\;\;\;\;\;\; |C|(|B|+4|A|)\leq |AB|, \vspace{2mm}\\ -|A|+|B|+|C|, \;\;\;\;\;\;\;\;\;\;\; |AB|\leq |C|(|B|-4|A|), \vspace{2mm}\\ (|C| +|A|)\sqrt{1-\dfrac{B^2}{4AC}}, \;\;\; otherwise.
	\end{cases}
\end{align*}
\end{lem}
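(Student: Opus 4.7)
The plan is to reduce the problem to sequential one-variable optimizations over the closed disk. Writing $z = re^{i\theta}$ with $r \in [0,1]$ and expanding, I obtain
\begin{align*}
|A + Bz + Cz^2|^2 = A^2 + B^2 r^2 + C^2 r^4 - 2AC r^2 + 2Br(A + Cr^2)\cos\theta + 4ACr^2\cos^2\theta,
\end{align*}
so that, setting $u = \cos\theta$, this is a quadratic in $u \in [-1,1]$ whose leading coefficient is $4ACr^2$. The sign of $AC$ therefore governs whether the maximum over $\theta$ is attained at the endpoints $u = \pm 1$ (i.e.\ on the real axis) or at an interior critical point, and this is exactly the dichotomy stated in (i) and (ii).

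For case (i), when $AC \geq 0$, the quadratic in $u$ is convex, so the maximum at radius $r$ equals $|A| + |B|r + |C|r^2$ and the outer problem is to maximize $g(r) := |A| + 1 + |B|r + (|C|-1)r^2$ on $[0,1]$. If $|C| \geq 1$ or $|B| \geq 2(1-|C|)$, then $g$ is nondecreasing on $[0,1]$, so the maximum is $g(1) = |A| + |B| + |C|$; otherwise the downward parabola has vertex $r_0 = |B|/(2(1-|C|)) \in (0,1)$, giving $g(r_0) = 1 + |A| + B^2/(4(1-|C|))$. This produces both branches of (i).

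For case (ii), when $AC < 0$, the $u$-quadratic is concave with critical point $u^\ast = -B(A + Cr^2)/(4ACr)$, which is interior to $[-1,1]$ only for certain $r$. The approach is to take the inner maximum over $u$ first (interior versus endpoint), obtaining two candidate functions of $r$, and then to optimize each over $r \in [0,1]$. The first two branches of (ii) arise when the outer vertex is interior and the inner critical point is active: the threshold $|B| < 2(1-|C|)$ records interiority of the outer vertex, while $-4AC(C^{-2}-1) \leq B^2$ records activity of the inner critical point at the extremal radius. Outside these regimes, the global maximum is attained either at $r = 1$ with $u = \pm 1$ (producing $|A|+|B|-|C|$ or $-|A|+|B|+|C|$) or at a joint interior stationary point (yielding $(|C|+|A|)\sqrt{1 - B^2/(4AC)}$); comparing these three candidates and determining which dominates gives the three branches of $R(A,B,C)$, with the inequalities $|C|(|B|+4|A|) \leq |AB|$ and $|AB| \leq |C|(|B|-4|A|)$ emerging directly from these pairwise comparisons.

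The main obstacle is the bookkeeping in case (ii): enumerating the stationary configurations after the inner maximization and then carving out exactly the region of $(A,B,C)$-space where each candidate realizes the global maximum. A useful normalization is to assume without loss of generality that $A \geq 0$ by multiplying the polynomial by a unimodular constant, and then to replace $z$ by $e^{i\alpha}z$ to prescribe the sign of $C$; this collapses the sign casework to the sign of $B$ alone. Even so, verifying that the stated thresholds are \emph{sharp} separators of the regimes — and that no extremal configuration has been overlooked — will require a systematic head-to-head comparison of all interior and boundary candidates at both stages.
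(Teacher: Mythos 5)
The first thing to note is that the paper does not prove this statement at all: Lemma \ref{lem-2.2} is quoted verbatim from Cho, Kim and Sugawa \cite{Cho-Kim-Sugawa-JMSJ-2007}, so there is no in-paper proof to compare against; your proposal has to be judged as a standalone proof of the cited result. Your treatment of case (i) is correct and essentially complete: the expansion $|A+Bz+Cz^2|^2 = A^2+B^2r^2+C^2r^4-2ACr^2+2Br(A+Cr^2)u+4ACr^2u^2$ with $u=\cos\theta$ is right, convexity in $u$ for $AC\geq 0$ forces the maximum onto $u=\pm 1$ where the value is $|A|+|B|r+|C|r^2$, and the one-variable analysis of $g(r)=1+|A|+|B|r+(|C|-1)r^2$ on $[0,1]$ reproduces both branches of (i), including the degenerate subcase $|C|\geq 1$.

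The genuine gap is case (ii), which is where all the content of the lemma lives, and there your proposal is a plan rather than a proof. You correctly identify the candidate extremal values (the endpoint configurations at $r=1$, $u=\pm1$ giving $||A|-|C||+|B|$, and the interior-$u$ configuration whose value simplifies via $(A-C)^2(1-B^2/(4AC))$ to $(|A|+|C|)\sqrt{1-B^2/(4AC)}$ at $r=1$), but you do not carry out the comparison that shows the stated inequalities $-4AC(C^{-2}-1)\leq B^2$, $B^2<\min\{4(1+|C|)^2,-4AC(C^{-2}-1)\}$, $|C|(|B|+4|A|)\leq|AB|$ and $|AB|\leq|C|(|B|-4|A|)$ are exactly the separators between regimes; you explicitly defer this (``will require a systematic head-to-head comparison''), so the hardest part of the lemma remains unverified. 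Two smaller inaccuracies: the value $(|C|+|A|)\sqrt{1-B^2/(4AC)}$ is attained at the boundary $r=1$ with $u$ interior (the $1-|z|^2$ term vanishes there), not at a ``joint interior stationary point'' as you state; and the origin of the first branch $1-|A|+B^2/(4(1-|C|))$ of (ii), with its sign flip on $|A|$ coming from $|A+Cr^2|=||A|-|C|r^2|$ when $AC<0$, is not explained. As written, the proposal would need the full case-(ii) bookkeeping supplied (or an explicit appeal to \cite{Cho-Kim-Sugawa-JMSJ-2007}, as the paper itself does) before it could stand as a proof.
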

To ensure a clear presentation, we have divided the content of Hankel determinants of logarithmic coefficients of inverse functions into three sections, each focusing on different classes of functions belonging to $\mathcal{A}$. Our main results for starlike, convex, and bounded turning functions are demonstrated separately within these sections.
\section{Sharp bound of $ |H_{2,1}(F_{f^{-1}}/2)| $ for the class $\mathcal{S}^{*}(1/2)$}
We obtain the following result finding the sharp bound of $ |H_{2,1}(F_{f^{-1}}/2)| $ for functions in the class $\mathcal{S}^{*}(1/2)$.
\begin{thm}\label{th-3.1}
Let $ f\in\mathcal{S}^{*}(1/2) $. Then
\begin{align}\label{eq-3.1}
	|H_{2,1}(F_{f^{-1}}/2)|\leq\frac{19}{288}.
\end{align}
The inequality is sharp.
\end{thm}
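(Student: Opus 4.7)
The overall plan is to rewrite the Hankel determinant in terms of the Carathéodory coefficients $c_{1},c_{2},c_{3}$ of the subordinating function, apply the Libera--Zlotkiewicz parameterization (Lemma 2.1), reduce via the quadratic inequality of Lemma 2.2 to a one-variable maximization in $t\in[0,1]$, and, in the dominant regime, extract the bound from the elementary identity $(6t^{2}-1)^{2}\ge 0$.

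First I would encode $f\in\mathcal{S}^{*}(1/2)$ as $2zf'(z)=(1+p(z))f(z)$ with $p(z)=1+c_{1}z+c_{2}z^{2}+c_{3}z^{3}+\cdots\in\mathcal{P}$. A routine power-series comparison yields $a_{2}=c_{1}/2$, $a_{3}=c_{1}^{2}/8+c_{2}/4$, and $a_{4}=c_{1}^{3}/48+c_{1}c_{2}/8+c_{3}/6$. Substituting into \eqref{eq-1.10} and collecting like terms should produce
\[
48\,H_{2,1}(F_{f^{-1}}/2)=\tfrac{3}{8}c_{1}^{4}-\tfrac{3}{4}c_{1}^{2}c_{2}-\tfrac{3}{4}c_{2}^{2}+c_{1}c_{3}.
\]
Using rotation invariance I may assume $c_{1}\ge 0$, and then substitute $c_{1}=2t$, $c_{2}=2t^{2}+2(1-t^{2})u$, together with \eqref{eq-2.4}, to obtain
\[
48\,H_{2,1}(F_{f^{-1}}/2)=t^{4}-4t^{2}(1-t^{2})u-(1-t^{2})(3+t^{2})u^{2}+4(1-t^{2})t(1-|u|^{2})v,
\]
with $t\in[0,1]$ and $u,v\in\overline{\mathbb{D}}$. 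Bounding $|v|\le 1$ and, for $t\in(0,1)$, factoring out $4t(1-t^{2})$, I arrive at an expression of the form $4t(1-t^{2})\bigl[\,|A+Bu+Cu^{2}|+1-|u|^{2}\,\bigr]$ with $A=t^{3}/(4(1-t^{2}))$, $B=-t$, $C=-(3+t^{2})/(4t)$.

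I would then apply Lemma 2.2 with these coefficients. Since $AC<0$ and $|C|=(3+t^{2})/(4t)>1$ throughout $(0,1)$, the first two subcases of Case (ii) are vacuous and only the $R(A,B,C)$ branch is in play. The branch $|A|+|B|-|C|$ fails, because it requires $3+t^{2}\le t^{4}$. The branch $-|A|+|B|+|C|$ holds exactly when $3t^{4}+5t^{2}-3\le 0$, i.e.\ $t^{2}\le y^{*}:=(\sqrt{61}-5)/6$, and after multiplying through by $4t(1-t^{2})$ it reduces to $48\,|H_{2,1}|\le -6t^{4}+2t^{2}+3$. The decisive algebraic step is the identity
\[
\tfrac{19}{6}-(-6t^{4}+2t^{2}+3)=\tfrac{1}{6}(6t^{2}-1)^{2}\ge 0,
\]
with equality exactly at $t^{2}=1/6$ (which lies in $(0,y^{*})$), giving the claimed $|H_{2,1}|\le 19/288$. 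For the remaining range $t^{2}\in(y^{*},1)$ one is in the third branch $(|C|+|A|)\sqrt{1-B^{2}/(4AC)}$, which simplifies to $(3-2t^{2})\sqrt{(7-3t^{2})/(3+t^{2})}$, and one checks it stays strictly below $19/6$; the endpoints $t=0$ and $t=1$ contribute only $1/16$ and $1/48$.

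For sharpness, the chain of equalities pins down $t=1/\sqrt{6}$ and $u=1$ (after which the $v$-term drops out). By Lemma 2.1 this uniquely determines the Carathéodory function
\[
p(z)=\frac{1+(2/\sqrt{6})\,z+z^{2}}{1-z^{2}},
\]
and the extremal $f\in\mathcal{S}^{*}(1/2)$ is then obtained by integrating $zf'(z)/f(z)=(1+p(z))/2=1/(1-\omega(z))$ with Schwarz function $\omega(z)=z(1/\sqrt{6}+z)/(1+z/\sqrt{6})$. The main technical obstacle is the case split in Lemma 2.2, specifically verifying that the third branch over $(y^{*},1)$ does not exceed $19/6$; once one is inside the $-|A|+|B|+|C|$ branch, the reduction to the perfect square $(6t^{2}-1)^{2}$ makes the optimization immediate and simultaneously identifies the extremal configuration.
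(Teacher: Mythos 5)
Your proposal is correct and follows essentially the same route as the paper: the same subordination formulas for $a_2,a_3,a_4$, the same reduction to $\frac{1}{384}(3c_1^4-6c_1^2c_2-6c_2^2+8c_1c_3)$, the Libera--Zlotkiewicz parameterization, and Lemma \ref{lem-2.2}(ii) with the identical $A,B,C$, leading to the same two active branches ($-|A|+|B|+|C|$ for $3\tau_1^4+5\tau_1^2-3\le 0$ and the square-root branch beyond). Three points of genuine difference are worth recording. First, you dispose of the first two subcases of Lemma \ref{lem-2.2}(ii) in one stroke by noting $|C|=(3+t^2)/(4t)>1$ on $(0,1)$, whereas the paper verifies each condition separately; your shortcut is valid since $|B|<2(1-|C|)$ and $B^2<-4AC(C^{-2}-1)$ are both impossible when $|C|>1$ and $AC<0$. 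Second, you replace the paper's calculus maximization of $\phi(t)=\frac{1}{48}(3+2t^2-6t^4)$ by the identity $\frac{19}{6}-(3+2t^2-6t^4)=\frac{1}{6}(6t^2-1)^2\ge 0$, which is cleaner and simultaneously locates the extremal $t$. Third, and most substantively, your extremal function differs from the paper's: the paper exhibits $zf'(z)/f(z)=1/(1-\beta z^2)$ with $\beta=\sqrt{19}/(3\sqrt{2})>1$, for which $zf'/f$ has a pole at $|z|=\beta^{-1/2}<1$, so that function does not actually lie in $\mathcal{S}^{*}(1/2)$; your construction via $\tau_1=1/\sqrt{6}$, $\tau_2=1$ yields $p(z)=(1+(2/\sqrt{6})z+z^2)/(1-z^2)$, a convex combination of $(1+z)/(1-z)$ and $(1-z)/(1+z)$ and hence genuinely in $\mathcal{P}$, and one checks directly from \eqref{eq-3.5} that it gives $H_{2,1}(F_{f^{-1}}/2)=-19/288$. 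The only soft spot in your sketch is the phrase ``one checks it stays strictly below $19/6$'' for the branch $(3-2t^2)\sqrt{(7-3t^2)/(3+t^2)}$ on $t^2\in(y^{*},1)$; the paper closes this by showing the function is decreasing there, with value about $48\times 0.0546<19/6$ at the left endpoint, and you should supply that monotonicity (or endpoint) computation to make the argument complete.
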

\begin{proof}
Let $f\in\mathcal{S}^{*}(1/2)$. Then in view of \eqref{eq-1.1} and \eqref{eq-1.7}, it follows that
\begin{align}\label{eq-3.2}
	\frac{zf^{\prime}(z)}{f(z)}=\frac{1}{2}(p(z) +1), \;\; z\in\mathbb{D}, 
\end{align} 
for some $p\in\mathcal{P}$ of the form \eqref{eq-2.1}. Since the class $\mathcal{P}$ and $|H_{2,1}(F_{f^{-1}}/2)|$ is invariant under rotation, we may assume that $c_1\in[0,2]$(see \cite{Carathéodory-MA-1907}; see also \cite[Vol. I, p.80, Theorem 3]{Goodman-1983}), that is, in view of \eqref{eq-2.1}, that $\tau_1\in[0,1]$. Substituting \eqref{eq-1.1} and \eqref{eq-2.1} into \eqref{eq-3.2} and equating coefficients, we obtain
\begin{align}\label{eq-3.3}
		a_2=\dfrac{1}{2}c_1,\;a_3=\dfrac{1}{8}(2c_2 + c^2_{1}), \; \mbox{and}\;a_4=\dfrac{1}{48}(8 c_3 + 6c_1 c_2 +c^3_{1}).
\end{align}
Using \eqref{eq-1.6} and \eqref{eq-3.3}, an computation leads to
\begin{align}\label{eq-3.4}
	H_{2,1}(F_{f^{-1}}/2)&=\frac{1}{48} \left(13a^4_2 -12a^2_2 a_3 - 12 a^2_3 + 12 a_2 a_4\right)\\&\nonumber=\frac{1}{384}\left(3c^4_{1} -6c^2_{1} c_2 -6c^2_{2} +8c_1 c_3\right).
\end{align}
By the Lemma \ref{lem-2.1} and \eqref{eq-3.4}, a simple computation shows that
\begin{align}\label{eq-3.5}
	H_{2,1}(F_{f^{-1}}/2)&=\frac{1}{48}\left(\tau^4_{1} -4(1-\tau^2_{1})\tau^2_{1}\tau_{2}-(1-\tau^2_{1})(3+\tau^2_{1})\tau^2_{2} \right.\\&\nonumber\left.\quad +4\tau_{1}\tau_{3}(1-\tau^2_{1})(1-|\tau^2_{2}|)\right).
\end{align}
Below, we explore the following possible cases involving $\tau_{1}$: \vspace{2mm}

\noindent{\bf Case 1.} Suppose that $\tau_{1}=1$. Then from \eqref{eq-3.5}, we easily obtain 
\begin{align*}
	|H_{2,1}(F_{f^{-1}}/2)|=\frac{1}{48}.
\end{align*}

\noindent{\bf Case 2.} Let $\tau_{1}=0$. Then from \eqref{eq-3.5}, we see that 
\begin{align*}
	|H_{2,1}(F_{f^{-1}}/2)|=\frac{1}{16}|\tau_{2}|^2\leq\frac{1}{16}.
\end{align*}
\noindent{\bf Case 3.} Suppose that $\tau_{1}\in(0,1)$. Applying the triangle inequality in \eqref{eq-3.5} and by using the fact that $|\tau_{3}|\leq 1$, we obtain
\begin{align}\label{eq-3.6}
	\nonumber|H_{2,1}(F_{f^{-1}}/2)|&\leq\frac{1}{48}\left(|\tau^4_{1} -4(1-\tau^2_{1})\tau^2_{1}\tau_{2}-(1-\tau^2_{1})(3+\tau^2_{1})\tau^2_{2}| \right.\\&\nonumber\left.\quad +4\tau_{1}(1-\tau^2_{1}) (1-|\tau^2_{2}|) \right)\\&=\frac{1}{12}\tau_{1}(1-\tau^2_{1}) \left(|A+B\tau_{2}+C\tau^2_{2}| +1 -|\tau_{2}|^2\right),
\end{align}
where 
\begin{align*}
	A:=\frac{\tau^3_{1}}{4(1-\tau^2_{1})}, \;\; B:=-\tau_{1}
	\;\; \mbox{and}\;\; C:=\frac{-(3+\tau^2_{1})}{4\tau_{1}}.
\end{align*}
Observe that $AC< 0$. Hence, we can apply case (ii) of Lemma \ref{lem-2.2}. Next, we check all the conditions of case (ii). \vspace{2mm}

\noindent{\bf 3(a)} We note the inequality
\begin{align*}
	-4AC\left(\frac{1}{C^2} -1\right)- B^2=\frac{(3+\tau^2_{1})\tau^2_{1}}{4(1- \tau^2_{1})}\left(\frac{16\tau^2_{1}}{(3+\tau^2_{1})^2} - 1\right) -\tau^2_{1}\leq 0
\end{align*}
is equivalent to
\begin{align*}
	\frac{(3+ 2\tau^2_{1} -5\tau^4_{1})}{4(1-\tau^2_{1})(3+\tau^2_{1})} \leq 0
\end{align*}
which is evidently  holds for $\tau_{1}\in(0,1)$. However, the inequality $|B|<2(1-|C|)$ is equivalent to $3\tau^2_{1} -4\tau_{1} + 3<0$, which is false for $\tau_{1}\in(0,1)$.

\noindent{\bf 3(b)} Since
\begin{align*}
	4(1+|C|)^2 =\frac{(3+4\tau_{1}+\tau^2_{1})^2}{4\tau^2_{1}}>0
\end{align*}
and 
\begin{align*}
	-4AC\left(\frac{1}{C^2} -1\right)=-\frac{(3+\tau^2_{1})\tau^2_{1}(9-10\tau^2_{1} +\tau^4_{1})}{4(1-\tau^2_{1})(3+\tau^2_{1})^2}<0,
\end{align*}
a simple computation shows that the inequality
\begin{align*}
	\tau^2_{1}=B^2<\min\left\{4(1+|C|)^2,-4AC\left(\frac{1}{C^2} -1\right)\right\}=-\frac{(3+\tau^2_{1})\tau^2_{1}(9-10\tau^2_{1} +\tau^4_{1})}{4(1-\tau^2_{1})(3+\tau^2_{1})^2}
\end{align*}
is false for $\tau_{1}\in(0,1)$.\vspace{1.2mm}

\noindent{\bf 3(c)} Next note that the inequality
\begin{align*}
	|C|(|B| +4|A|) -|AB|=\frac{(3+\tau^2_{1})}{4\tau_{1}}\left(\tau_{1} +\frac{\tau^3_{1}}{(1-\tau^2_{1})}\right)-\frac{\tau^4_{1}}{4(1-\tau^2_{1})}\leq 0
\end{align*}
is equivalent to $3+\tau^2_{1} -\tau^4_{1}\leq 0$, which is false for $\tau_{1}\in(0,1)$.

\noindent{\bf 3(d)} Note that the inequality
\begin{align*}
	|AB|-|C|(|B|-4|A|)= \frac{\tau^4_{1}}{4(1-\tau^2_{1})}- \frac{(3+\tau^2_{1})}{4\tau_{1}}\left(\tau_{1} -\frac{\tau^3_{1}}{(1-\tau^2_{1})}\right)\leq 0
\end{align*}
is equivalent to $3\tau^4_{1} +5\tau^2_{1} -3\leq 0$, which is true for $0<\tau_{1}\leq \tau^{''}_{1}=\sqrt{\frac{1}{6}(-5+\sqrt{61})}\approx 0.684379$. Applying Lemma \ref{lem-2.2} for $0<\tau_{1}\leq \tau^{''}_{1}$,
we obtain
\begin{align}\label{eq-3.7}
	\nonumber|H_{2,1}(F_{f^{-1}}/2)|&\leq \frac{1}{12}\tau_{1}(1-\tau^2_{1}) (-|A|+|B|+|C|) \\&=\frac{1}{48}(3+2\tau^2_{1} -6\tau^4_{1})=\phi(\tau_{1}),
\end{align}
where 
\begin{align*}
	\phi(t):=\frac{1}{48}(3+2t^2 -6t^4).
\end{align*}
Since $\phi^{\prime}(t)=0$ for $t\in(0,1)$ holds only for $t_0=1/\sqrt{6}<\tau^{''}_{1}$, we deduce that $\phi$ is increasing in $[0,t_0]$ and decreasing in $[t_0,\tau^{''}_{1}]$. Therefore, in  $0<\tau_{1}\leq \tau^{''}_{1}$, we obtain
\begin{align*}
	\phi(\tau_{1})\leq\phi(t_0)=\frac{19}{288}\approx 0.0659722.
\end{align*}
Hence, from \eqref{eq-3.7} we see that
\begin{align*}
	|H_{2,1}(F_{f^{-1}}/2)|\leq\phi(\tau_{1})\leq\phi(t_0)=\frac{19}{288}.
\end{align*}
\noindent{\bf 3(e)} Applying Lemma \ref{lem-2.2} for $\tau^{''}_{1}<\tau_{1}<1$, we get
\begin{align}\label{eq-3.8}
	\nonumber|H_{2,1}(F_{f^{-1}}/2)|&\leq\frac{1}{12}\tau_{1}(1-\tau^2_{1}) (|C| +|A|)\sqrt{1-\dfrac{B^2}{4AC}} \\&=\frac{(3-2\tau^2_{1})}{48}\sqrt{\frac{7-3\tau^2_{1}}{3+\tau^2_{1}}}=\psi(\tau_{1}),
\end{align}
where
\begin{align*}
	\psi(t):=\frac{(3-2t^2)}{48}\sqrt{\frac{7-3t^2}{3+t^2}}.
\end{align*}
A simple computation shows that
\begin{align*}
	\psi^{\prime}(t)=\frac{t(-33 + 10t^2 + 3t^4)}{12(t^2 +3)^{\frac{3}{2}}\sqrt{7-3t^2}}< 0, \;\;\;\; \tau^{''}_{1}<t <1,
\end{align*}
hence, the function $\psi$ is decreasing. Therefore, we have $\psi(t)\leq\psi(\tau^{''}_{1})$ for $\tau^{''}_{1}\leq t \leq 1$. Hence, from \eqref{eq-3.8}, we obtain that
\begin{align*}
	|H_{2,1}(F_{f^{-1}}/2)|\leq\psi(\tau_{1})\leq\psi(\tau^{''}_{1})\approx 0.0545938.
\end{align*}
Summarizing Cases $1$, $2$, and $3$, the inequality \eqref{eq-3.1} is established.\vspace{2mm}

The proof can be concluded by establishing the sharpness of the bound. In order to show that, we consider the function $f\in\mathcal{A}$ as follows
\begin{align*}
	\frac{zf^{\prime}(z)}{f(z)}=\frac{1}{1 -\beta z^2}, \;\; z\in\mathbb{D}, 
\end{align*}
where $\beta= {\sqrt{19}}/{3\sqrt{2}}$, for which $a_2=a_4=0$ and $a_3=\beta/2$. By a simple computation, it can be easily shown that $|H_{2,1}(F_{f^{-1}}/2)|=19/288$, and this shows that the bound in the result is sharp. This completes the proof.
\end{proof}
\section{Sharp bound of $ |H_{2,1}(F_{f^{-1}}/2)| $ for the class $\mathcal{S}^{c}(1/2)$}
We obtain the following sharp bound of $ |H_{2,1}(F_{f^{-1}}/2)| $ for functions in the class $\mathcal{S}^{c}(1/2)$. 
\begin{thm}\label{th-2.3}
Let $ f\in\mathcal{S}^{c}(1/2) $. Then
\begin{align}\label{eq-4.1}
	|H_{2,1}(F_{f^{-1}}/2)|\leq\frac{1}{144}.
\end{align}
The inequality is sharp.
\end{thm}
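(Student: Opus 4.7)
The plan is to mirror the strategy of Theorem \ref{th-3.1}, translated to the convex setting. First I would exploit the defining relation
\begin{align*}
1+\frac{zf^{\prime\prime}(z)}{f^{\prime}(z)}=\frac{1}{2}(p(z)+1),\quad z\in\mathbb{D},
\end{align*}
for some $p\in\mathcal{P}$ of the form \eqref{eq-2.1}. Equating coefficients yields
\begin{align*}
a_{2}=\frac{c_{1}}{4},\quad a_{3}=\frac{c_{1}^{2}+2c_{2}}{24},\quad a_{4}=\frac{c_{1}^{3}+6c_{1}c_{2}+8c_{3}}{192}.
\end{align*}
Substituting these into \eqref{eq-1.10} and simplifying leads to a representation of the form
\begin{align*}
H_{2,1}(F_{f^{-1}}/2)=\frac{1}{36864}\bigl(11c_{1}^{4}-40c_{1}^{2}c_{2}-64c_{2}^{2}+96c_{1}c_{3}\bigr).
\end{align*}
As in Theorem \ref{th-3.1}, rotational invariance of $\mathcal{P}$ and of $|H_{2,1}(F_{f^{-1}}/2)|$ allows me to assume $c_{1}\in[0,2]$, that is, $\tau_{1}\in[0,1]$ in the Libera--Zlotkiewicz parameterization of Lemma \ref{lem-2.1}.

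Next I would insert \eqref{eq-2.2}--\eqref{eq-2.4} into the above expression and collect terms to obtain an identity of the schematic form
\begin{align*}
H_{2,1}(F_{f^{-1}}/2)=\alpha\tau_{1}(1-\tau_{1}^{2})\bigl(A+B\tau_{2}+C\tau_{2}^{2}\bigr)+\beta\tau_{1}(1-\tau_{1}^{2})(1-|\tau_{2}|^{2})\tau_{3}
\end{align*}
for explicit coefficients $A,B,C$ depending only on $\tau_{1}$ (plus purely $\tau_{1}^{4}$ terms arising from the $\tau_{1}=1$ reduction). Then I split into the three cases $\tau_{1}=1$, $\tau_{1}=0$, and $\tau_{1}\in(0,1)$. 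The boundary cases should be immediate direct estimates. For the interior case $\tau_{1}\in(0,1)$, using $|\tau_{3}|\leq 1$ together with the triangle inequality reduces the problem to bounding
\begin{align*}
\tau_{1}(1-\tau_{1}^{2})\bigl(|A+B\tau_{2}+C\tau_{2}^{2}|+1-|\tau_{2}|^{2}\bigr),
\end{align*}
i.e., precisely the quantity $Y(A,B,C)$ controlled by Lemma \ref{lem-2.2}.

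The main obstacle, as in the starlike case, will be determining which of the several branches of case (ii) of Lemma \ref{lem-2.2} applies on which subinterval of $(0,1)$: the signs of $AC$, the comparison of $|B|$ with $2(1-|C|)$, the inequality $|C|(|B|+4|A|)\lessgtr|AB|$, etc., all have to be checked one by one and typically partition $(0,1)$ into two or three subintervals with a critical $\tau_{1}''$ determined by a low-degree polynomial in $\tau_{1}^{2}$. On each subinterval the resulting upper bound is a single-variable function $\phi(\tau_{1})$ or $\psi(\tau_{1})$, and standard calculus (setting the derivative to zero, which should reduce to a small polynomial equation in $\tau_{1}^{2}$) will identify an interior maximizer $t_{0}$; the candidate suprema from the various subintervals should then be compared to conclude that the global maximum equals $1/144$.

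Finally, for sharpness I would guess, by analogy with Theorem \ref{th-3.1}, the extremal function defined by
\begin{align*}
1+\frac{zf^{\prime\prime}(z)}{f^{\prime}(z)}=\frac{1}{1-\beta z^{2}},\quad z\in\mathbb{D},
\end{align*}
for a suitable $\beta\in(0,1)$ chosen so that $\beta$ matches the extremal value of $\tau_{2}^{2}$ arising in the analysis (this forces $a_{2}=a_{4}=0$ while fixing $a_{3}$ in terms of $\beta$). A direct substitution into $(13a_{2}^{4}-12a_{2}^{2}a_{3}-12a_{3}^{2}+12a_{2}a_{4})/48=-a_{3}^{2}/4$ then shows that the appropriate choice of $\beta$ yields $|H_{2,1}(F_{f^{-1}}/2)|=1/144$, confirming that the bound is attained.
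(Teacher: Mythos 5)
Your overall strategy is exactly the paper's: the coefficient formulas $a_2=c_1/4$, $a_3=(c_1^2+2c_2)/24$, $a_4=(c_1^3+6c_1c_2+8c_3)/192$ and the resulting expression $\tfrac{1}{36864}(11c_1^4-40c_1^2c_2-64c_2^2+96c_1c_3)$ both match, as does the plan of passing to the Libera--Zlotkiewicz parameters and invoking Lemma \ref{lem-2.2}. However, the part of your write-up that goes beyond restating the starlike template makes concrete predictions that are wrong for this class, and the decisive computations are left undone. With
\begin{align*}
A=\frac{-\tau_1^3}{24(1-\tau_1^2)},\qquad B=-\frac{\tau_1}{6},\qquad C=\frac{-(2+\tau_1^2)}{3\tau_1},
\end{align*}
one has $AC>0$, so it is case (i) of Lemma \ref{lem-2.2} that applies, not case (ii) as you anticipate; none of the branch conditions $|C|(|B|+4|A|)\lessgtr|AB|$ etc.\ ever enter. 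Moreover the condition $|B|\geq 2(1-|C|)$ holds for \emph{all} $\tau_1\in(0,1)$ (since $5\tau_1^2-12\tau_1+8>0$), so there is no partition of $(0,1)$ into subintervals and no interior maximizer $t_0$: the single resulting bound $\tfrac{1}{2304}(16-4\tau_1^2-11\tau_1^4)$ is decreasing, with supremum $16/2304=1/144$ approached as $\tau_1\to 0$, i.e.\ the extremal configuration is the boundary case $\tau_1=0$, $|\tau_2|=1$ that you dismiss as ``immediate.'' Your plan, read literally, would send you hunting for a critical point that does not exist.

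A second, smaller slip: in the sharpness example $1+zf''/f'=1/(1-\beta z^2)$ one gets $a_3=\beta/6$ and $|H_{2,1}(F_{f^{-1}}/2)|=a_3^2/4=\beta^2/144$, so equality forces $\beta=1$, not $\beta\in(0,1)$; the paper indeed takes $1+zf''/f'=1/(1-z^2)$, which is admissible since $1/(1-z^2)$ maps $\mathbb{D}$ into $\{\operatorname{Re} w>1/2\}$. In short, the skeleton is right, but to count as a proof you must actually compute $A,B,C$, observe that $AC>0$ puts you in the (much simpler) case (i), and verify the monotonicity of the resulting polynomial bound; the analysis here is substantially easier than in Theorem \ref{th-3.1}, not comparably intricate.
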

\begin{proof}
Let $f\in\mathcal{S}^{c}(1/2)$. Then in view of \eqref{eq-1.1} and \eqref{eq-1.8}, we have
\begin{align}\label{eq-4.2}
	1+ \frac{zf^{\prime\prime}(z)}{f^{\prime}(z)}=\frac{1}{2}(p(z) +1), \;\; z\in\mathbb{D}, 
\end{align} 
for some $p\in\mathcal{P}$ of form \eqref{eq-2.1}. By the similar argument that being used previously, we see that $\tau_1\in[0,1]$. Substituting \eqref{eq-1.1} and \eqref{eq-2.1} into \eqref{eq-4.2} and equating the coefficients, we obtain
\begin{align}\label{eq-4.3}
		a_2=\dfrac{1}{4}c_1,\; a_3=\dfrac{1}{24}(2c_2 + c^2_{1}),\; \mbox{and}\;a_4=\dfrac{1}{192}(8 c_3 + 6c_1 c_2 +c^3_{1}).
\end{align}
 Using \eqref{eq-1.6} and \eqref{eq-4.3}, it is easy to see that
\begin{align}\label{eq-4.4}
	H_{2,1}(F_{f^{-1}}/2)&=\frac{1}{48} \left(13a^4_2 -12a^2_2 a_3 - 12 a^2_3 + 12 a_2 a_4\right)\\&\nonumber=\frac{1}{36864}\left(11c^4_{1} -40c^2_{1} c_2 -64c^2_{2} +96c_1 c_3\right).
\end{align}
By the Lemma \ref{lem-2.1} and \eqref{eq-4.4}, a straightforward computation shows that
\begin{align}\label{eq-4.5}
	H_{2,1}(F_{f^{-1}}/2)&=\frac{1}{2304}\left(-\tau^4_{1} -4(1-\tau^2_{1})\tau^2_{1}\tau_{2}-8(1-\tau^2_{1})(2+\tau^2_{1})\tau^2_{2} \right.\\&\nonumber\left.\quad +24\tau_{1}\tau_{3}(1-\tau^2_{1})(1-|\tau^2_{2}|)\right).
\end{align}
Below, we discuss the following possible cases on $\tau_{1}$: \vspace{2mm}

\noindent{\bf Case 1.} Suppose that $\tau_{1}=1$. Then from \eqref{eq-4.5}, we easily obtain 
\begin{align*}
	|H_{2,1}(F_{f^{-1}}/2)|=\frac{1}{2304}.
\end{align*}

\noindent{\bf Case 2.} Let $\tau_{1}=0$. Then from \eqref{eq-4.5}, we see that 
\begin{align*}
	|H_{2,1}(F_{f^{-1}}/2)|=\frac{1}{144}|\tau_{2}|^2\leq\frac{1}{144}.
\end{align*}

\noindent{\bf Case 3.} Suppose that $\tau_{1}\in(0,1)$. Applying the triangle inequality in \eqref{eq-4.5} and by using the fact that $|\tau_{3}|\leq 1$, we obtain
\begin{align}\label{eq-4.6}
	\nonumber|H_{2,1}(F_{f^{-1}}/2)|&\leq\frac{1}{2304}\left(|-\tau^4_{1} -4(1-\tau^2_{1})\tau^2_{1}\tau_{2}-8(1-\tau^2_{1})(2+\tau^2_{1})\tau^2_{2}| \right.\\&\nonumber\left.\quad +24\tau_{1}(1-\tau^2_{1}) (1-|\tau^2_{2}|) \right)\\&=\frac{1}{96}\tau_{1}(1-\tau^2_{1}) \left(|A+B\tau_{2}+C\tau^2_{2}| +1 -|\tau_{2}|^2\right),
\end{align}
where 
\begin{align*}
	A:=\frac{-\tau^3_{1}}{24(1-\tau^2_{1})}, \;\; B:=-\frac{\tau_{1}}{6}
	\;\; \mbox{and}\;\; C:=\frac{-(2+\tau^2_{1})}{3\tau_{1}}.
\end{align*}
Observe that $AC>0$, so we can apply case (i) of Lemma \ref{lem-2.2}. Now we check all the conditions of case (i). \vspace{2mm}

\noindent{\bf 3(a)} We note that the inequality
\begin{align*}
	|B|-2(1-|C|)=\frac{\tau_{1}}{6} -2\left(1-\frac{(2+\tau^2_{1})} {3\tau_{1}}\right)=\frac{5\tau^2_{1} -12\tau_{1} +8}{6\tau_{1}}> 0
\end{align*}
is true for all $\tau_{1}\in(0,1)$. Thus it follows from  Lemma \ref{lem-2.2} and the inequality \eqref{eq-4.6} that
\begin{align*}
	|H_{2,1}(F_{f^{-1}}/2)|\leq\frac{1}{96}\tau_{1}(1-\tau^2_{1})\left(|A|+|B|+|C|\right)=\frac{1}{2304}\left(16-4\tau^2_{1}-11\tau^4_{1}\right)\leq\frac{1}{144}.
\end{align*}
\noindent{\bf 3(b)} Next, it is easy to check that
\begin{align*}
	|B|-2(1-|C|)=\frac{\tau_{1}}{6} -2\left(1-\frac{(2+\tau^2_{1})} {3\tau_{1}}\right)=\frac{5\tau^2_{1} -12\tau_{1} +8}{6\tau_{1}}< 0
\end{align*}
which is not true for all $\tau_{1}\in(0,1)$.\vspace{1.2mm}

Summarizing Cases $1$, $2$, and $3$, the inequality \eqref{eq-4.1} is established.\vspace{2mm}

To complete the proof, it is sufficient to show that the bound is sharp. In order to show that we consider the function $f\in\mathcal{A}$ as follows
\begin{align*}
    1+ \frac{zf^{\prime\prime}(z)}{f^{\prime}(z)}=\frac{1}{1 -z^2}, \;\; z\in\mathbb{D}, 
\end{align*}
with $a_2=a_4=0$ and $a_3=1/6$. By a simple computation, it can be easily shown that $|H_{2,1}(F_{f^{-1}}/2)|=1/144$. This completes the proof.
\end{proof}

\section{Sharp bound of $ |H_{2,1}(F_{f^{-1}}/2)| $ for the class $\mathcal{R}(1/2)$}
We obtain the following sharp bound of $ |H_{2,1}(F_{f^{-1}}/2)| $ for functions in the class $\mathcal{R}(1/2)$. 
\begin{thm}\label{th-5.1}
Let $ f\in\mathcal{R}(1/2) $. Then
\begin{align}\label{eq-5.1}
	|H_{2,1}(F_{f^{-1}}/2)|\leq\frac{1}{36}.
\end{align}
The inequality is sharp.
\end{thm}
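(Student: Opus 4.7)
The plan mirrors the proofs of Theorems~\ref{th-3.1} and \ref{th-2.3}. The first step is to use $f\in\mathcal{R}(1/2)$ together with \eqref{eq-1.9} to write $f'(z) = \tfrac{1}{2}(1+p(z))$ for some $p\in\mathcal{P}$ of the form \eqref{eq-2.1}. Matching coefficients with \eqref{eq-1.1} gives
\begin{align*}
a_2 = \tfrac{1}{4}c_1, \qquad a_3 = \tfrac{1}{6}c_2, \qquad a_4 = \tfrac{1}{8}c_3,
\end{align*}
and substituting into the explicit formula \eqref{eq-1.10} for $H_{2,1}(F_{f^{-1}}/2)$ yields
\begin{align*}
H_{2,1}(F_{f^{-1}}/2) = \frac{1}{36864}\bigl(39\,c_1^4 - 96\,c_1^2 c_2 - 256\,c_2^2 + 288\,c_1 c_3\bigr).
\end{align*}
By rotational invariance of $\mathcal{P}$ and of $|H_{2,1}(F_{f^{-1}}/2)|$, I may assume $\tau_1\in[0,1]$, and I would then apply Lemma~\ref{lem-2.1} to rewrite the determinant in terms of $\tau_1,\tau_2,\tau_3$.

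The boundary cases are immediate: at $\tau_1 = 1$ the expression reduces to $|H_{2,1}(F_{f^{-1}}/2)| = 1/2304$, and at $\tau_1 = 0$ one gets $|H_{2,1}(F_{f^{-1}}/2)| = |\tau_2|^2/36 \leq 1/36$, so the extremal configuration should be $\tau_1 = 0,\ |\tau_2| = 1$. For the generic case $\tau_1\in(0,1)$, I would apply the triangle inequality together with $|\tau_3|\leq 1$, and factor out $\tfrac{1152\,\tau_1(1-\tau_1^2)}{36864}$ to bring the bound into the standard shape $|A + B\tau_2 + C\tau_2^2| + 1 - |\tau_2|^2$ of Lemma~\ref{lem-2.2} with
\begin{align*}
A = -\frac{\tau_1^3}{72(1-\tau_1^2)}, \qquad B = -\frac{4\tau_1}{9}, \qquad C = -\frac{8+\tau_1^2}{9\tau_1}.
\end{align*}
Since $A,C<0$, we have $AC>0$, so part~(i) of Lemma~\ref{lem-2.2} applies. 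A short calculation gives
\begin{align*}
|B| - 2(1-|C|) = \frac{6\tau_1^2 - 18\tau_1 + 16}{9\tau_1},
\end{align*}
whose numerator has discriminant $324-384 = -60<0$ and is therefore strictly positive on $(0,1)$. Hence only the first branch $Y(A,B,C)=|A|+|B|+|C|$ is ever used, and after routine simplification the bound collapses to
\begin{align*}
|H_{2,1}(F_{f^{-1}}/2)| \leq \frac{1024 - 384\tau_1^2 - 624\tau_1^4}{36864},
\end{align*}
a decreasing function of $\tau_1$ on $[0,1]$ with supremum $1/36$ at $\tau_1 = 0$ (and, as a sanity check, value $1/2304$ at $\tau_1 = 1$, matching Case~1).

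For sharpness I would exhibit the function determined by $f'(z) = 1/(1-z^2)$, corresponding to $p(z) = (1+z^2)/(1-z^2)$ and thus precisely to the extremal parameters $\tau_1 = 0,\ \tau_2 = 1$. This function has $a_2=a_4=0$ and $a_3=1/3$, so a direct substitution into \eqref{eq-1.10} gives $H_{2,1}(F_{f^{-1}}/2) = -1/36$, attaining the bound. The bulk of the labour is the coefficient bookkeeping in passing from $(c_1,c_2,c_3)$ to $(\tau_1,\tau_2,\tau_3)$; the decisive analytic point---and what makes this case markedly cleaner than Theorem~\ref{th-3.1}---is that the sign configuration places us in the $AC>0$ regime with $|B|>2(1-|C|)$ throughout, so no sub-case dissection of the kind required in the starlike setting is necessary here.
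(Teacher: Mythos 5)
Your proposal is correct and follows essentially the same route as the paper's own proof: the same coefficient identities, the same reduction via Lemma \ref{lem-2.1}, the same $A$, $B$, $C$ in case (i) of Lemma \ref{lem-2.2} (your bound $\frac{1024-384\tau_1^2-624\tau_1^4}{36864}$ is the paper's $\frac{1}{2304}(64-24\tau_1^2-39\tau_1^4)$), and the same extremal function $f'(z)=1/(1-z^2)$. Nothing of substance differs.
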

\begin{proof}
Let $f\in\mathcal{R}(1/2)$. Then in view of \eqref{eq-1.1} and \eqref{eq-1.9}, it follows that
\begin{align}\label{eq-5.2}
	 f^{\prime}(z)=\frac{1}{2}(p(z) +1), \;\; z\in\mathbb{D}, 
\end{align} 
for some $p\in\mathcal{P}$ of form \eqref{eq-2.1}. A similar argument as used previously shows that $\tau_1\in[0,1]$. Substituting \eqref{eq-1.1} and \eqref{eq-2.1} into \eqref{eq-5.2} and equating the coefficients, we obtain
\begin{align}\label{eq-5.3}
	a_2=\frac{1}{4}c_1, \;\; a_3=\frac{1}{6}c_2 \;\;\mbox{and}\;\; a_4=\frac{1}{8}c_3.
\end{align}
Using \eqref{eq-1.6} and \eqref{eq-4.3}, it is easy to see that
\begin{align}\label{eq-5.4}
	H_{2,1}(F_{f^{-1}}/2)&=\frac{1}{48}\left(13a^4_2 -12a^2_2 a_3 - 12 a^2_3 + 12 a_2 a_4\right)\\&\nonumber=\frac{1}{36864}\left(39c^4_{1} -96c^2_{1} c_2 -256c^2_{2} +288c_1 c_3\right).
\end{align}
By the Lemma \ref{lem-2.1} and \eqref{eq-4.4}, a simple computation shows that
\begin{align}\label{eq-5.5}
	H_{2,1}(F_{f^{-1}}/2)&=\frac{1}{2304}\left(-\tau^4_{1} -32(1-\tau^2_{1})\tau^2_{1}\tau_{2}-8(1-\tau^2_{1})(8+\tau^2_{1})\tau^2_{2} \right.\\&\nonumber\left.\quad +72\tau_{1}\tau_{3}(1-\tau^2_{1})(1-|\tau^2_{2}|)\right).
\end{align}
Below, we discuss the following possible cases on $\tau_{1}$: \vspace{2mm}
	
\noindent{\bf Case 1.} Suppose that $\tau_{1}=1$. Then from \eqref{eq-5.5}, we easily obtain 
\begin{align*}
	|H_{2,1}(F_{f^{-1}}/2)|=\frac{1}{2304}.
\end{align*}
	
\noindent{\bf Case 2.} Let $\tau_{1}=0$. Then from \eqref{eq-5.5}, we see that 
\begin{align*}
	|H_{2,1}(F_{f^{-1}}/2)|=\frac{1}{36}|\tau_{2}|^2\leq\frac{1}{36}.
\end{align*}
	
\noindent{\bf Case 3.} Suppose that $\tau_{1}\in(0,1)$. Applying the triangle inequality in \eqref{eq-5.5} and by using the fact that $|\tau_{3}|\leq 1$, we obtain
\begin{align}\label{eq-5.6}
	\nonumber|H_{2,1}(F_{f^{-1}}/2)|&\leq\frac{1}{2304}\left(|-\tau^4_{1} -32(1-\tau^2_{1})\tau^2_{1}\tau_{2}-8(1-\tau^2_{1})(8+\tau^2_{1})\tau^2_{2}| \right.\\&\nonumber\left.\quad +72\tau_{1}(1-\tau^2_{1}) (1-|\tau^2_{2}|) \right)\\&=\frac{1}{32}\tau_{1}(1-\tau^2_{1}) \left(|A+B\tau_{2}+C\tau^2_{2}| +1 -|\tau_{2}|^2\right),
\end{align}
where 
\begin{align*}
	A:=\frac{-\tau^3_{1}}{72(1-\tau^2_{1})}, \;\; B:=-\frac{4\tau_{1}}{9}
		\;\; \mbox{and}\;\; C:=\frac{-(8+\tau^2_{1})}{9\tau_{1}}.
\end{align*}
Observe that $AC>0$, so we can apply case (i) of Lemma \ref{lem-2.2}. Now we check all the conditions of case (i). \vspace{2mm}
	
\noindent{\bf 3(a)} We note the inequality
\begin{align*}
	|B|-2(1-|C|)=\frac{4\tau_{1}}{9} -2\left(1-\frac{(8+\tau^2_{1})} {9\tau_{1}}\right)=\frac{6\tau^2_{1} -18\tau_{1} +16}{9\tau_{1}}> 0
\end{align*}
which is true for all $\tau_{1}\in(0,1)$. Thus it follows from  Lemma \ref{lem-2.2} and the inequality \eqref{eq-5.6} that
\begin{align*}
	|H_{2,1}(F_{f^{-1}}/2)|\leq\frac{1}{32}\tau_{1}(1-\tau^2_{1})\left(|A|+|B|+|C|\right)=\frac{1}{2304}\left(64-24\tau^2_{1}-39\tau^4_{1}\right)\leq\frac{1}{36}.
\end{align*}
\noindent{\bf 3(b)} Next, it is easy to check that
\begin{align*}
	|B|-2(1-|C|)=\frac{4\tau_{1}}{9} -2\left(1-\frac{(8+\tau^2_{1})} {9\tau_{1}}\right)=\frac{6\tau^2_{1} -18\tau_{1} +16}{9\tau_{1}}< 0
\end{align*}
which is not true for all $\tau_{1}\in(0,1)$.\vspace{1.2mm}
	
Summarizing Cases $1$, $2$, and $3$, the inequality \eqref{eq-5.1} is established.\vspace{2mm}
	
To complete the proof, it is sufficient to show that the bound is sharp. In order to show that we consider the function $f\in\mathcal{A}$ as follows
\begin{align*}
	f^{\prime}(z)=\frac{1}{1 -z^2}, \;\; z\in\mathbb{D}, 
\end{align*}
with $a_2=a_4=0$ and $a_3=1/3$. By a simple computation, it can be easily shown that $|H_{2,1}(F_{f^{-1}}/2)|=1/36$. This completes the proof.
\end{proof}
\section{Declarations}
\noindent\textbf{Compliance of Ethical Standards:}\\

\noindent\textbf{Conflict of interest.} The authors declare that there is no conflict  of interest regarding the publication of this paper.\vspace{1.5mm}

\noindent\textbf{Data availability statement.}  Data sharing is not applicable to this article as no datasets were generated or analyzed during the current study.

\end{document}